\documentclass[a4paper,12pt,fleqn]{article}
\usepackage[latin1]{inputenc}  \usepackage[T1]{fontenc}
\usepackage{lscape}
\usepackage{tabularx}     
\usepackage{pstricks}    
\usepackage{rotating}
\usepackage{amsfonts,amsmath,amsthm,amssymb,dsfont}
\usepackage{marginnote}
\usepackage{rotate}
\usepackage{pgfplots}
\usepackage{tikz}
\usepackage{tikz-cd}
\usepackage[arrow,matrix,curve]{xy} 	
\usepackage{xcolor}
\title{Sequences with almost Poissonian Pair Correlations}
\author{Christian Weiß \and Thomas Skill}
\date{\today}

\newtheorem{thm}{Theorem}
\newtheorem{defi}[thm]{Definition}

\newtheorem{lem}[thm]{Lemma}
\newtheorem{prop}[thm]{Proposition}
\newtheorem{cor}[thm]{Corollary}

\newcommand{\RR}{{\mathbb{R}}}

\newcommand{\NN}{{\mathbb{N}}}

\newcommand{\norm}[1]{\left\lVert#1\right\rVert}

\makeindex

\begin{document} 

\maketitle

\thanks{\textit{This paper is dedicated to the Ruhr West University of Applied Sciences on its 10th anniversary}}

\begin{abstract} Although a generic uniformly distributed sequence has Poissonian pair correlations, only one explicit example has been found up to now. Additionally, it is even known that many classes of uniformly distributed sequences, like van der Corput sequences, Kronecker sequences and LS sequences, do not have Poissonian pair correlations. In this paper, we show that van der Corput sequences and the Kronecker sequence for the golden mean are as close to having Poissonian pair correlations as possible: they both have $\alpha$-pair correlations for all $0 < \alpha < 1$ but not for $\alpha = 1$ which corresponds to Poissonian pair correlations.
\end{abstract}


\section{Introduction}

The local and global properties of uniformly distributed sequences $(x_n)$ in $[0,1)$ have been extensively studied for many years. Among these properties Poissonian pair correlations have gained lots of attention in recent time. Intuitively speaking, Poissonian pair correlations mean that the number of small distances between points of the sequence has the expected order of magnitude, see \cite{LS19}. More precisely, the norm of a point $x \in \RR$ is defined by $\norm{x} := \min(x-\lfloor x \rfloor, 1-(x-\lfloor x \rfloor))$, where $\lfloor x \rfloor$ denotes the Gau\ss{} bracket. We say that a sequence $(x_n)$ in $[0,1)$ has \textbf{Poissonian pair correlations} if 
$$\lim_{N \to \infty} \frac{1}{N} \# \left\{ 1 \leq l \neq m \leq N \ : \ \norm{x_l - x_m} \leq \frac{s}{N} \right\} = 2s.$$
Poissonian pair correlations are a generic property of uniformly distributed sequences, i.e. an i.i.d. random sequence (sampled from the uniform distribution in $[0,1)$) almost surely is Possonian. The opposite is true as well.
\begin{thm}[Aistleitner-Lachmann-Pausinger, \cite{ALP18}; Grepstad-Larcher, \cite{GL17}] Let $(x_n)$ be a sequence in [0,1) and assume that
	$$\lim_{N \to \infty} \frac{1}{N} \# \left\{ 1 \leq l \neq m \leq N \ : \ \norm{x_l - x_m} \leq \frac{s}{N} \right\} = 2s$$
	holds for all $s>0$. Then $(x_n)$ is uniformly distributed. 
\end{thm}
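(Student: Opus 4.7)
Plan: I argue by contradiction. Suppose $(x_n)$ is not uniformly distributed. By weak-$*$ compactness of the probability measures on $[0,1)$, some subsequence of the empirical measures $\mu_N := \frac{1}{N}\sum_{n=1}^N \delta_{x_n}$ converges weakly to a limit $\mu \neq \lambda$ (Lebesgue measure). The goal is to show that the Poissonian pair correlation hypothesis forces $\mu = \lambda$, producing the contradiction.

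The crux is a lower bound on a smoothed pair correlation count. Introduce
$$T_N(s) := \frac{1}{N}\sum_{l \neq m}\max\!\Bigl(0,\,1 - \tfrac{N}{s}\norm{x_l - x_m}\Bigr) = \int_0^1 F_N(us)\,du,$$
where $F_N(s)$ denotes the pair correlation count from the hypothesis; the second equality uses $\max(0,1-t) = \int_0^1 \mathbf{1}_{\{u \geq t\}}\,du$ and Fubini. The Poissonian hypothesis together with bounded convergence yields $T_N(s) \to \int_0^1 2us\,du = s$. Now fix $M \in \NN$, partition $[0,1)$ into $M$ equal intervals $I_0, \ldots, I_{M-1}$, and set $L = s/N$. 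Averaging the count of ``same-sub-interval'' pairs over translation offsets $t \in [0,L)$ gives the identity
$$s\,T_N(s) = \int_0^L \sum_k m_k^{(t)}\bigl(m_k^{(t)} - 1\bigr)\,dt,$$
where $m_k^{(t)}$ is the number of $n \leq N$ with $x_n$ in the $k$-th sub-interval of length $L$ starting at offset $t$. Applying Cauchy--Schwarz per $I_j$ to the distribution of the $N\mu_N(I_j)$ points of $I_j$ among the sub-intervals meeting $I_j$ (of which there are at most $\lceil 1/(ML)\rceil + 1$) and then taking $N \to \infty$ along the subsequence yields
$$s \geq sM\sum_{j=0}^{M-1}\mu(I_j)^2 - 1, \qquad \text{hence} \qquad M\sum_{j=0}^{M-1}\mu(I_j)^2 \leq 1 + \tfrac{1}{s}.$$

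Sending $s \to \infty$ forces $M\sum_j \mu(I_j)^2 \leq 1$. Combined with the Cauchy--Schwarz inequality $M\sum_j \mu(I_j)^2 \geq \bigl(\sum_j \mu(I_j)\bigr)^2 = 1$, equality holds, so $\mu(I_j) = 1/M$ for every $j$. Since $M \in \NN$ was arbitrary, $\mu$ agrees with $\lambda$ on every $M$-adic interval, hence $\mu = \lambda$, contradicting our assumption. The main obstacle in this approach is obtaining the tight constant in the lower bound: a naive count using only same-sub-interval pairs at a single offset loses a factor of two (pairs straddling sub-interval boundaries are ignored) and yields only the weaker bound $M\sum \mu(I_j)^2 \leq 2 + 1/s$. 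Replacing $F_N$ by its integral $T_N$---equivalently, averaging over translations of the sub-interval partition---assigns the correct weight to near-boundary pairs and is what closes the gap.
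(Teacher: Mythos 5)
This theorem is quoted by the paper from \cite{ALP18} and \cite{GL17}; the paper itself gives no proof, so there is nothing internal to compare against. Your argument is correct and is essentially the argument of Aistleitner--Lachmann--Pausinger: pass to a weak-$*$ limit $\mu$ of the empirical measures, bound the pair correlation count from below via Cauchy--Schwarz applied to the occupation numbers of intervals of length $s/N$, and conclude $M\sum_j \mu(I_j)^2 \leq 1$ for every $M$, which together with the reverse Cauchy--Schwarz inequality forces $\mu = \lambda$. Your identification of the smoothing step as the crux is also accurate: the identity $sT_N(s) = \int_0^L \sum_k m_k^{(t)}(m_k^{(t)}-1)\,dt$ follows from $\int_0^L \mathbf{1}\{x_l,x_m \text{ unseparated at offset } t\}\,dt = L\max(0,1-\norm{x_l-x_m}/L)$, and averaging over the offset is exactly what recovers the sharp constant that a single fixed partition loses. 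Two routine technicalities deserve a sentence each in a full write-up: (1) since $1/L$ need not be an integer, the translated ``partition'' of the circle has one defective cell, contributing only an $o(1)$ error to both the identity and the Cauchy--Schwarz bound; (2) weak-$*$ convergence gives $\mu_N(I_j) \to \mu(I_j)$ only when $\mu$ does not charge $\partial I_j$, which one handles by the portmanteau inequality for open sets or by perturbing the partition off the (at most countably many) atoms of $\mu$. Neither affects the validity of the proof.
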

Although Poissonian pair correlations are a generic property of uniformly distributed sequences, it seems to be a very hard task to find explicit examples. The only known positive result is $(x_n) = \left\{\sqrt{n}\right\}$, see \cite{BMV15}, where $\left\{ z \right\} := z - \lfloor z \rfloor$ denotes the fractional part of $z$. Indeed, many canonical candidates of uniformly distributed sequences are known to fail having Poissonian pair correlations, see \cite{BBC19}, \cite{LS18}, \cite{PS18}. In this paper, we work with a definition that allows us to measure how far a sequence is from having Poissonian pair correlations. It is going back \cite{NP07} and we have learned about it from \cite{Ste18}.
\begin{defi}
	A sequence $(x_n)$ in $[0,1)$ has \textbf{$\alpha$-pair correlations} for $0 < \alpha \leq 1$ if
	$$F_N^\alpha(s) := \lim_{N \to \infty} \frac{1}{N^{2-\alpha}} \# \left\{ 1 \leq l \neq m \leq N \ : \ \norm{x_l - x_m} \leq \frac{s}{N^\alpha} \right\} = 2s.$$
\end{defi} 
In fact, Steinerberger showed that also $\alpha$-pair correlations imply uniform distribution.
\begin{thm}[Steinerberger, \cite{Ste18}] Let $(x_n)_{n \in \NN} \in [0,1)$ be such that for every $s > 0$ we have that
	$$\lim_{N \to \infty} F_N^\alpha(s) = 2s$$
	then $(x_n)_{n \in \NN}$ is uniformly distributed in $[0,1)$.
\end{thm}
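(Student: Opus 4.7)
My plan is to argue by contrapositive: assume $(x_n)$ is not uniformly distributed on $[0,1)$ and produce an $s > 0$ at which $F_N^\alpha(s) \not\to 2s$. By Prokhorov compactness, pass to a subsequence $(N_k)$ along which the empirical measures $\mu_N = N^{-1}\sum_{n\leq N}\delta_{x_n}$ converge weakly to a probability measure $\mu \neq \lambda$. Pick a continuity interval $J = [a, b]$ of $\mu$ with $\mu(J) \neq b-a$, and set $\delta := |\mu(J) - (b-a)|/2 > 0$; then $|A(J, N_k)/N_k - (b-a)| \geq \delta$ for all large $k$, where $A(J, N)$ counts $x_1, \ldots, x_N$ in $J$.

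The key tool is a Cauchy--Schwarz lower bound on pair counts. Fix $s > 0$, partition $[0, 1)$ into $M = \lfloor N^\alpha/s\rfloor$ equal sub-intervals of length $\leq s/N^\alpha$, and let $n_i$ be the number of $x_1, \ldots, x_N$ in the $i$-th cell. Any two distinct indices in a common cell contribute to the count defining $F_N^\alpha(s)$, so
\begin{equation*}
N^{2-\alpha} F_N^\alpha(s) \;\geq\; \sum_i n_i(n_i - 1) \;=\; \sum_i n_i^2 - N.
\end{equation*}
Applying Cauchy--Schwarz separately to the $\approx (b-a)M$ cells inside $J$ (where the counts sum to at least $((b-a) + \delta) N$) and to the remaining cells outside $J$, a short computation gives, up to boundary corrections,
\begin{equation*}
\sum_i n_i^2 \;\geq\; \frac{N^2}{M}\left(1 + \frac{\delta^2}{(b-a)\bigl(1-(b-a)\bigr)}\right).
\end{equation*}
Combining the two displays, using $M^{-1} = sN^{-\alpha}(1+o(1))$, and sending $N = N_k \to \infty$ (with the harmless $-1$ appearing only when $\alpha = 1$, which is absorbed by taking $s$ large) yields the key inequality
\begin{equation*}
\liminf_{k \to \infty} F_{N_k}^\alpha(s) \;\geq\; s\left(1 + \frac{\delta^2}{(b-a)\bigl(1-(b-a)\bigr)}\right).
\end{equation*}

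Set $\eta := \delta^2/\bigl((b-a)(1-(b-a))\bigr)$. If $\eta > 1$, the inequality immediately contradicts the hypothesis $F_N^\alpha(s) \to 2s$ and the proof is complete. The main obstacle is therefore the remaining regime $\eta \leq 1$, which occurs e.g.\ when $J$ has length close to $1/2$ and only modest density excess. Here one needs to amplify. If $\mu$ has an atom or a singular continuous part, one can locate nested intervals $J_n$ around a concentration point with $|J_n| \to 0$ yet $\mu(J_n)$ bounded below, so the corresponding $\eta_n \to \infty$ and the same argument closes. If instead $\mu = \rho\lambda$ is absolutely continuous with $\rho \not\equiv 1$, one refines the estimate on $\sum_i n_i^2$ to obtain the sharper Riemann-sum limit $M \sum_i n_i^2/N^2 \to \int_0^1 \rho^2 \, dx > 1$ and accounts as well for close pairs straddling adjacent cells, ultimately yielding the matching asymptotic $F_N^\alpha(s) \to 2s\int_0^1 \rho^2 \, dx > 2s$. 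I expect this amplification step, particularly the simultaneous passage to the limit $N \to \infty$ and $M = M(N) \to \infty$ in the absolutely continuous case, to be the most delicate part of the argument.
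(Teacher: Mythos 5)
The paper does not prove this theorem; it is quoted from Steinerberger \cite{Ste18}, so your attempt can only be judged on its own merits. Your first half is correct and is indeed the standard opening move: passing to a weak-* limit $\mu\neq\lambda$ of the empirical measures, the Cauchy--Schwarz bound $\sum_i n_i^2\geq \frac{N^2}{M}\bigl(1+\eta\bigr)$ with $\eta=\delta^2/\bigl((b-a)(1-(b-a))\bigr)$ is right (modulo the slip that you need $M=\lceil N^\alpha/s\rceil$, not $\lfloor\cdot\rfloor$, for same-cell pairs to lie within $s/N^\alpha$), and you correctly isolate the crux: this only yields $\liminf F_{N_k}^\alpha(s)\geq s(1+\eta)$, which contradicts $2s$ only when $\eta>1$. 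The problem is that your proposed amplification does not close the remaining regime. (i) In the singular continuous case, shrinking intervals need not drive $\eta_n\to\infty$: for the classical Cantor measure one has $\mu(J)\asymp |J|^{\log 2/\log 3}$, so $\eta_n\asymp |J_n|^{2\log2/\log3-1}\to 0$ since $\log2/\log3>1/2$; your nested-interval argument works for atoms but not for general singular parts. (ii) In the absolutely continuous case, the claimed asymptotic $F_N^\alpha(s)\to 2s\int_0^1\rho^2$ is not derivable from weak convergence of the empirical measures: weak convergence gives no control at scale $s/N^\alpha\to 0$, and the pairs straddling adjacent cells admit no lower bound without microscale information (all points of cell $i$ could sit at its left end and all points of cell $i+1$ at its right end). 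The rigorous same-cell bound gives only $\liminf F_{N_k}^\alpha(s)\geq s\int_0^1\rho^2$, which does not contradict $2s$ unless $\int\rho^2>2$. Worse, this factor-of-two barrier is intrinsic: the hypothesis itself forces $\sum_i n_i^2\leq N+2sN^{2-\alpha}(1+o(1))=N+2\frac{N^2}{M}(1+o(1))$, so the single-scale cell-counting argument can never certify more than $\eta\leq 1$, and densities such as $\rho=1+\varepsilon_0\cos(2\pi x)$ with small $\varepsilon_0$ have $\eta<1$ for \emph{every} interval.

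The missing idea --- present in one form or another in all the published proofs (Aistleitner--Lachmann--Pausinger, Grepstad--Larcher, Steinerberger) --- is to use the hypothesis at \emph{all} scales $s'\in(0,s]$ simultaneously rather than at the single scale $s$. Concretely, average the same-cell pair count over translates $\theta$ of the grid: a pair at distance $d\leq 1/M$ lands in a common cell for a proportion $1-Md$ of the shifts, so
\begin{equation*}
\int_0^1 \sum_i n_i^{(\theta)}\bigl(n_i^{(\theta)}-1\bigr)\,d\theta \;=\; \sum_{l\neq m}\bigl(1-M\norm{x_l-x_m}\bigr)^+ \;=\; \int_0^1 N^{2-\alpha}F_N^\alpha(us)\,du \;\longrightarrow\; sN^{2-\alpha}\int_0^1 2u\,du\cdot N^{0} ,
\end{equation*}
i.e.\ the average equals $\frac{N^2}{M}(1+o(1))$ (dominated convergence is justified since $F_N^\alpha(us)\leq F_N^\alpha(s)$ is uniformly bounded). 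Hence some translate of the grid has $\sum_i n_i(n_i-1)\leq \frac{N^2}{M}(1+o(1))$, while your Cauchy--Schwarz lower bound holds for \emph{every} translate (up to an $O(1/M)$ boundary correction in approximating $J$ by cells) and gives $\geq\frac{N^2}{M}(1+\eta)-N$. Since $N=o(N^2/M)$ for $\alpha<1$, this forces $\eta\leq o(1)$, a contradiction for any fixed $\delta>0$ --- with no case distinction on the nature of $\mu$ needed at all. Without some such averaging or second-moment input, your argument as written cannot be completed.
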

 
It is an important and easy-to-prove fact that the greater $\alpha$ is, the harder it is that a sequence has $\alpha$-pair correlations.
\begin{prop} \label{prop:ordering} Let $1 \geq \alpha_1 > \alpha_2 > 0$ and let $(x_n)$ be a sequence having $\alpha_1$-pair correlation. Then $(x_n)$ also has $\alpha_2$-pair correlations.
\end{prop}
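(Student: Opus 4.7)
The natural starting point is the scaling identity
\[
F_N^{\alpha_2}(s) \;=\; N^{\alpha_2-\alpha_1}\,F_N^{\alpha_1}\bigl(sN^{\alpha_1-\alpha_2}\bigr),
\]
obtained by rewriting $s/N^{\alpha_2} = (sN^{\alpha_1-\alpha_2})/N^{\alpha_1}$ in the definition of $F_N^{\alpha_2}(s)$ and tracking the powers of $N$. Setting $t_N:=sN^{\alpha_1-\alpha_2}\to\infty$, the claim $F_N^{\alpha_2}(s)\to 2s$ reduces to showing $F_N^{\alpha_1}(t_N)/t_N \to 2$; in other words, the asymptotic growth rate $2$ that $\alpha_1$-pair correlation supplies at every fixed argument must persist along the polynomially growing arguments $t_N$.

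To establish this I would fix a large positive integer $K$ and telescope
\[
F_N^{\alpha_1}(t_N) \;=\; \sum_{k=1}^{K} \Bigl(F_N^{\alpha_1}(kt_N/K) - F_N^{\alpha_1}((k-1)t_N/K)\Bigr).
\]
Each summand counts the pairs with $\|x_l-x_m\|$ lying in an annulus of width $t_N/(KN^{\alpha_1})=s/(KN^{\alpha_2})$, divided by $N^{2-\alpha_1}$. Heuristically, $\alpha_1$-pair correlation forces each annular count to be asymptotically $2t_N/K$, so that the sum is $\approx 2t_N$; dividing by $t_N$ then delivers $F_N^{\alpha_1}(t_N)/t_N\to 2$, as required.

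The key obstacle is that the partition endpoints $kt_N/K$ themselves depend on $N$, whereas the hypothesis only controls $F_N^{\alpha_1}(u)$ at \emph{fixed} $u>0$. To bypass this I would exploit the monotonicity of $u\mapsto F_N^{\alpha_1}(u)$, which, combined with the pointwise convergence to the continuous function $u\mapsto 2u$, upgrades the convergence to be uniform on every compact interval $[0,L]$, and couple this with the uniform distribution of $(x_n)$---a consequence of $\alpha_1$-pair correlation via Steinerberger's theorem above---to control the pair counts at macroscopic scales. Together, these two estimates should sandwich each annular summand within $\varepsilon_K\cdot(t_N/K)$ of $2t_N/K$, uniformly in $k\in\{1,\dots,K\}$ for $N$ large, where $\varepsilon_K\to 0$ as $K\to\infty$. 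Passing to the limit first in $N$ for fixed $K$, and then in $K$, closes the argument.
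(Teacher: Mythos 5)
Your scaling identity and the reduction to showing $F_N^{\alpha_1}(t_N)/t_N \to 2$ with $t_N = sN^{\alpha_1-\alpha_2}\to\infty$ are correct, and you have put your finger on exactly the right difficulty: the hypothesis only controls $F_N^{\alpha_1}$ at \emph{fixed} arguments. The trouble is that your proposed remedy never reaches the scale where that difficulty lives. The monotonicity-plus-continuous-limit upgrade to uniform convergence on compacta is valid, but it only controls $F_N^{\alpha_1}(u)$ for $u\leq L$ with $L$ fixed, whereas $t_N\to\infty$; and in the telescoping each annulus $\bigl((k-1)t_N/K,\,kt_N/K\bigr]$ has both endpoints growing like $N^{\alpha_1-\alpha_2}$, so no finite $K$ brings the summands back into the compact range. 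Uniform distribution, for its part, controls point counts only in intervals of fixed positive length; the intervals relevant here have length of order $N^{-\alpha_2}\to 0$, and plain uniform distribution (absent a discrepancy bound of size $o(N^{-\alpha_2})$, which neither hypothesis supplies) says nothing about them. So the sandwich ``each annular count is $2t_N/K$ up to $\varepsilon_K t_N/K$'' is precisely the statement that needs to be proved, and neither of your two tools delivers it. This is a genuine gap, not a detail to be routinized.

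For comparison, the paper's own proof takes the blunt route: it substitutes the $N$-dependent parameter $sN^{\alpha_1-\alpha_2}$ directly into the $\alpha_1$-asymptotic and records the error as $o(N^{2-\alpha_1})$, which is to say it silently assumes exactly the uniformity in $s$ that you correctly identified as the obstacle. Your diagnosis is therefore sharper than the paper's treatment, but your plan as written does not close the gap either. What is actually required is control of the pair counting function at the intermediate scale $N^{-\alpha_2}$ --- for instance via a covering/second-moment argument over a partition of $[0,1)$ into intervals of length comparable to $N^{-\alpha_2}$, where one must rule out an excess or deficit of order $N^{2-\alpha_2}$ pairs at distances between $TN^{-\alpha_1}$ and $sN^{-\alpha_2}$ --- and that needs an input beyond ``$\alpha_1$-pair correlations at each fixed $s$'' and ``uniform distribution''.
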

\begin{proof} Since $(x_n)$ has $\alpha_1$-pair correlations, it follows that
	\begin{align*}
	\# \left\{ 1 \leq l \neq m \right.& \leq N \  : \left. \ \norm{x_l - x_m} \leq \frac{s}{N^{\alpha_2}} \right\}\\ & = \# \left\{ 1 \leq l \neq m \leq N \ : \ \norm{x_l - x_m} \leq \frac{sN^{\alpha_1-\alpha_2}}{N^{\alpha_1}} \right\}\\
	& = 2sN^{\alpha_1-\alpha_2} \cdot N^{2-\alpha_1} + o(N^{2-\alpha_1}) = 2sN^{2-\alpha_2} + o(N^{2-\alpha_1})  
	\end{align*}
	which implies the assertion of the proposition because $\alpha_1 > \alpha_2$.
\end{proof}
The following is an immediate consequence of the proposition.
\begin{cor} Let $0 < \alpha \leq 1$ and $(X_i)$ be a sequence of i.i.d. random variables (sampled from the uniform distribution in $[0,1)$), then for all $s \geq 0$ we have $F_N^\alpha(s) \to 2s$, as $N \to \infty$ almost surely.
\end{cor}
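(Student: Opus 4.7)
The statement is essentially immediate from Proposition \ref{prop:ordering}, so the plan is almost entirely bookkeeping. First I would recall the fact cited in the introduction, that an i.i.d.\ sequence $(X_i)$ sampled from the uniform distribution on $[0,1)$ almost surely has Poissonian pair correlations; in the language of Definition 3, this says that, with probability one, $F_N^{1}(s) \to 2s$ for every $s \geq 0$. This is precisely the $\alpha = 1$ case of the corollary, and it is the only probabilistic input needed.

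Next I would treat the case $0 < \alpha < 1$. The key observation is that Proposition \ref{prop:ordering} is a purely deterministic statement about a given real sequence in $[0,1)$. Hence I would fix a realization $\omega$ in the probability-one event on which $(X_i(\omega))$ has $1$-pair correlations, and apply the proposition with $\alpha_1 = 1$ and $\alpha_2 = \alpha$ to conclude that this same realization has $\alpha$-pair correlations, i.e.\ $F_N^\alpha(s) \to 2s$ for all $s \geq 0$. Since this holds for every $\omega$ in the event of probability one, the claim follows.

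The only minor subtlety is that the definition requires the convergence to hold for all $s \geq 0$ on a single probability-one event (rather than, for each $s$, an $s$-dependent null set). This is handled by the standard trick of selecting a countable dense set of parameters $s$ and exploiting monotonicity of the counting function $\#\{\,l\neq m : \norm{X_l - X_m} \leq t\,\}$ in $t$: convergence to $2s$ on the countable set pins down the limit everywhere. There is no real obstacle here; the argument is essentially a one-line reduction to Proposition \ref{prop:ordering}.
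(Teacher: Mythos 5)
Your proposal is correct and takes essentially the same route as the paper, which presents the corollary as an immediate consequence of Proposition~\ref{prop:ordering} applied, realization by realization, to the almost-sure Poissonian ($\alpha=1$) pair correlations of an i.i.d.\ uniform sequence. The additional care you take to get a single $s$-independent null set (via a countable dense set of parameters and monotonicity of the counting function) is a standard detail that the paper leaves implicit.
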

The maximal parameter $\alpha$ such that a sequence $(x_n)$ has $\alpha$-pair correlations may be regarded as a measure how close a sequence is to having Poissonian pair correlations. The two main results of this paper concern all van der Corput sequences and the Kronecker sequence for the golden mean $\phi=\tfrac{1+\sqrt{5}}{2}$. Although van der Corput sequences and Kronecker sequences are not only uniformly distributed but even form classes of low-discrepancy sequences, it is well-known and easy to show that they do not have Poissonian pair correlations. In this paper we prove, that they fail as closely to have Poissonian pair correlations as possible in the sense of $\alpha$-pair correlations.
\begin{thm} \label{thm:vdC} Let $(x_n)$ be the van der Corput sequence in base $b$. The sequence $(x_n)$ has $\alpha$-pair correlations for all $0 < \alpha < 1$ but does not have Poissonian pair correlations.
\end{thm}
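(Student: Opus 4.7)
The plan is to split Theorem \ref{thm:vdC} into two claims: (i) for every $\alpha \in (0,1)$, the van der Corput sequence in base $b$ has $\alpha$-pair correlations, and (ii) it fails to have Poissonian pair correlations. Claim (ii) is disposed of quickly: it is a classical fact that for $N = b^k$ the first $N$ points of the sequence coincide, as a set, with the uniform lattice $\{j/N : 0 \leq j < N\}$. Hence any two distinct points satisfy $\|x_l - x_m\| \geq 1/N$, and choosing $s = 1/2$ the threshold $s/N = 1/(2N)$ lies strictly below $1/N$, so $F_{b^k}^{1}(1/2) = 0$ for every $k$. This rules out the value $2s = 1$ required by Poissonian pair correlations.

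For claim (i), the strategy is to reduce the two-dimensional pair-count to $N$ one-dimensional counts and to control each of them by the well-known low discrepancy of the van der Corput sequence. Fix $s > 0$, set $\varepsilon = s/N^\alpha$, and for each index $l \leq N$ observe that $A_l := \{y \in [0,1) : \|y - x_l\| \leq \varepsilon\}$ is an arc of total length $2\varepsilon$ in $\RR/\ZZ$, decomposing into at most two subintervals of $[0,1)$. By the definition of the discrepancy $D_N$ of $(x_n)$,
\[
\bigl|\#\{m \leq N : x_m \in A_l\} - 2\varepsilon N\bigr| \leq 2 N D_N.
\]
Summing over $l$ and subtracting the $N$ diagonal contributions yields the key identity
\[
\#\{1 \leq l \neq m \leq N : \|x_l - x_m\| \leq s/N^\alpha\} = 2s N^{2-\alpha} + O(N^2 D_N) - N.
\]

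Inserting the classical estimate $D_N = O(\log N / N)$, valid for every van der Corput sequence in base $b$, turns the error term into $O(N \log N)$, which is of strictly smaller order than $N^{2-\alpha}$ whenever $\alpha < 1$. Dividing by $N^{2-\alpha}$ therefore gives $F_N^\alpha(s) = 2s + O(N^{\alpha-1} \log N) \to 2s$ as $N \to \infty$, establishing claim (i). There is no genuinely hard step in this plan; the main obstacle is simply to verify that the discrepancy bound really is strong enough, since the logarithmic factor in $N D_N$ is precisely what makes every $\alpha < 1$ accessible. Conversely, this same logarithmic error explains why the argument breaks down exactly at the critical value $\alpha = 1$, which is consistent with the negative part of the theorem.
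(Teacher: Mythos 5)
Your proposal is correct, but the positive part takes a genuinely different route from the paper. The paper proves the $\alpha$-pair correlation property by hand: it exploits that for $N=b^n$ the point set is (essentially) the lattice $\{j/b^n\}$, counts neighbours at each admissible multiple of $1/b^n$, and then handles general $N$ via a decomposition into blocks $A$, $B$, $C$ reflecting the self-similar structure of the sequence, arguing inductively over the digits of $N$. You instead reduce the pair count to $N$ local counts $\#\{m\le N: x_m\in A_l\}$ over arcs of length $2s/N^{\alpha}$ and invoke the classical discrepancy bound $D_N=O(\log N/N)$, giving $\#\{l\neq m:\norm{x_l-x_m}\le s/N^{\alpha}\}=2sN^{2-\alpha}+O(N^2D_N)$ and hence convergence for every $\alpha<1$. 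This is shorter, cleaner, and strictly more general: it shows that \emph{any} sequence with $D_N=o(N^{-\alpha})$ has $\alpha$-pair correlations, so the same three lines would also dispose of the positive half of Theorem~\ref{thm:kron} (the golden-mean Kronecker sequence is low-discrepancy), whereas the paper's Section~3 builds an elaborate gap-counting machinery for that case. What the paper's approach buys in exchange is explicit combinatorial control of the gap structure (via the Three Gap Theorem and Lemma~\ref{lem:multiplicities}), which is information the discrepancy argument discards. Two minor points on your write-up: the identification of the first $N=b^k$ points with the lattice holds for the indices $0,\dots,b^k-1$, so with the convention $1\le l\neq m\le N$ one point of denominator $b^{k+1}$ sneaks in; the minimal distance is still $\ge \tfrac{1}{2}b^{-k}$, so taking $s=1/4$ (or noting that $O(1)$ pairs do not affect the limit) repairs the negative part verbatim. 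Also, an arc on $\RR/\ZZ$ decomposes into at most two intervals of $[0,1)$, so the constant in your discrepancy estimate depends on whether $D_N$ denotes the extreme or the star discrepancy; this is immaterial for the conclusion.
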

Recall that a \textbf{Kronecker sequence} is of the form $(z_n)_{n \geq 0} = (\left\{ nz \right\})_{n \geq 0}$ with $z \in \RR$. We restrict here to the Kronecker sequence for the golden mean since our proof is already rather technical in this case.
\begin{thm} \label{thm:kron} The sequence $(x_n) = \{ n\phi\}$ has $\alpha$-pair correlations for all $0 < \alpha < 1$ but does not have Poissonian pair correlations.
\end{thm}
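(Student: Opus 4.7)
The plan is to reduce the pair correlation count to the statistics of the small values of $\norm{n\phi}$ and then exploit the excellent Diophantine behaviour of the golden mean. Since $\{n\phi\}$ is an additive orbit we have $\norm{x_l - x_m} = \norm{(l-m)\phi}$, so grouping according to the difference $n = l - m > 0$ gives the identity
\begin{equation*}
\#\left\{1 \leq l \neq m \leq N : \norm{x_l - x_m} \leq \tfrac{s}{N^\alpha}\right\} = 2\sum_{n=1}^{N-1}(N-n)\,\mathbf{1}_{\norm{n\phi} \leq s/N^\alpha}.
\end{equation*}
This identity drives both halves of the proof.

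For the positive direction ($\alpha$-pair correlations for all $\alpha < 1$), I would use that $\phi$ has bounded continued fraction partial quotients, so the Kronecker sequence $(n\phi)$ has star discrepancy $D_N^* = O(\log N / N)$. Applied to the symmetric target set $[0,t) \cup (1-t,1)$, this yields, with an error term uniform in $t$,
\begin{equation*}
A(M,t) := \#\{1 \leq n \leq M : \norm{n\phi} \leq t\} = 2tM + O(\log M).
\end{equation*}
An elementary Abel summation rewrites the weighted count as $\sum_{n=1}^{N-1}(N-n)\mathbf{1}_{\norm{n\phi}\leq t} = \sum_{m=1}^{N-1} A(m,t) = tN^2 + O(N\log N)$. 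Plugging in $t = s/N^\alpha$ and dividing by $N^{2-\alpha}$ gives
\begin{equation*}
F_N^\alpha(s) = 2s + O(N^{\alpha-1}\log N),
\end{equation*}
and the error vanishes as $N \to \infty$ precisely when $\alpha < 1$.

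For the failure at $\alpha = 1$, I would exhibit an explicit subsequence on which $F_N^1(s)$ does not converge to $2s$. Binet's formula gives the sharp identity $\norm{F_k \phi} = \phi^{-k}$ for every Fibonacci number $F_k$, and classical best-approximation theory tells us that the minimum of $\norm{n\phi}$ over $n \in \{1, \ldots, F_{k+1}-1\}$ is attained at $n = F_k$. Specialising the central identity to $N = F_{k+1}$, the indicator is identically zero whenever $s/F_{k+1} < \phi^{-k}$, i.e. whenever $s < F_{k+1}/\phi^{k}$. Since $F_{k+1}/\phi^{k} \to \phi/\sqrt{5}$, for any fixed $s \in (0, \phi/\sqrt{5})$ one obtains $F_{F_{k+1}}^1(s) = 0$ for all sufficiently large $k$, ruling out convergence to $2s$.

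The main obstacle will be ensuring that the error in $A(M,t)$ is genuinely uniform in $t$ across the full range $t = s/N^\alpha$ as $\alpha$ approaches $1$. Standard discrepancy machinery provides this uniformity, but the transition regime where $t$ is comparable to one of the Diophantine scales $\phi^{-j}$ deserves extra care and will be handled via the three-distance theorem together with the explicit identity $\norm{F_j\phi} = \phi^{-j}$; everything else reduces to a routine accounting of error terms.
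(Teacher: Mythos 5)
Your proposal is correct, and it proves the theorem by a genuinely different and substantially more elementary route than the paper. You exploit the group structure $\norm{x_l-x_m}=\norm{(l-m)\phi}$ to collapse the pair count to the weighted sum $2\sum_{n=1}^{N-1}(N-n)\mathbf{1}_{\norm{n\phi}\leq s/N^\alpha}$, and then feed in the classical discrepancy bound $MD_M^*=O(\log M)$ for the golden-mean Kronecker sequence via Abel summation; the resulting error $O(N^{\alpha-1}\log N)$ vanishes exactly for $\alpha<1$. The paper instead works directly with the geometry of the point set: it invokes the Three Gap Theorem, counts large and small gaps inside intervals $J_k^N(x_n^*)$ via the Ostrowski representation (its Lemma~\ref{lem:multiplicities}), computes the limiting ratio of gap types, and then handles general $N$ by an $A$/$B$/$C$ decomposition over the Ostrowski digits. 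Your argument buys two things: it is much shorter, and it generalizes immediately to every badly approximable $z$ (indeed to any $z$ whose Kronecker sequence satisfies $ND_N^*=o(N^{1-\alpha})$), whereas the authors explicitly restrict to $\phi$ because their gap-counting is already technical there. What the paper's approach buys is finer local information (the exact two-gap structure and the $\phi$-ratio of gap multiplicities), which is of independent interest but not needed for the theorem. Your negative direction is essentially the paper's: both rest on the fact that $\min_{1\leq n<q_{h}}\norm{n\phi}$ is of order $1/q_h$ and too large for the Poissonian count to be nonzero along $N=q_h$; you sharpen it with the exact value $\norm{F_k\phi}=\phi^{-k}$ where the paper uses $\norm{q_{h-1}\phi}>1/(2q_h)$. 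One remark: the ``main obstacle'' you flag at the end is not actually an obstacle --- the discrepancy bound is by definition uniform over all intervals, so the error in $A(M,t)$ is automatically uniform in $t$ and no extra treatment of the transition scales $\phi^{-j}$ is required.
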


\section{Van der Corput sequences}

Van der Corput sequences are classical examples of uniformly distributed sequences. They are defined as follows: for an integer $b \geq 2$ the $b$-ary representation of $n \in \NN$ is $n = \sum_{j=0}^\infty a_j(n) b^j$ with $a_j(n) \in \NN$. The radical-inverse function is defined by $g_b(n)=\sum_{j=0}^\infty a_j(n) b^{-j-1}$ for all $n \in \NN$. Finally, the \textbf{van der Corput sequence in base $b$} is given by $(x_n) = g_b(n)$. This section is dedicated to the proof of Theorem~\ref{thm:vdC}.

\begin{proof}[Proof of Theorem~\ref{thm:vdC}] The fact that $(x_n)$ does not have Poissonian pair correlations is e.g. an immediate consequence from the observation that the denominator of the first $b^{n}-1$ elements of $(x_n)$ is at most $b^n$ which implies $\norm{x_l-x_m} \geq \tfrac{1}{b^n}$ for all $l,m \leq b^n-1$.\\[12pt]
In order to facilitate the proof regarding the $\alpha$-pair correlations of van der Corput sequences, we add $0$ as zeroth element of $x_n$. Otherwise we would have to take into account that the sequence has an extra \textit{hole} at zero which would result in some extra factors $-1$ but, of course, not change the asymptotic. Let $\alpha > 0 $ and $s > 0$ be arbitrary but fixed. 
Let $N,n \in \NN$ with $b^{n-1} < N \leq b^{n}$ and define $\gamma_N := \tfrac{N^{\alpha}}{b^{n}}.$
Note that the greatest denominator which appears in $(x_n)_{n=0}^{N}$ is $b^{n}$. Without loss of generality we have $s > \gamma_N$ because $\gamma_N \to 0$ for $N \to \infty$. Then
\begin{align} \label{eq1}
	\norm{x_l - x_m} N^\alpha & = k \frac{N^\alpha}{b^{n}} \stackrel{!}{\leq} s
\end{align}
holds for some $k \in \NN$ if and only if $k \leq sb^{n}N^{-\alpha}$. For fixed $l$ there exist between $2\lfloor \tfrac{s}{b \gamma_N} \rfloor$ and $2\lfloor \tfrac{s}{\gamma_N} \rfloor$ (if $N = b^n$) many different $x_m$ satisfying \eqref{eq1}. For the simplest case, namely $N = b^{n}$, we get
\begin{align*} \# \left\{ 1 \leq l \neq m \leq N \ : \ \norm{x_l - x_m} \leq \frac{s}{N^\alpha} \right\} & = N \cdot 2\lfloor \tfrac{s}{\gamma_N}  \rfloor \leq 2N \frac{s}{\gamma_N} \\ & = 2s N N^{-\alpha} b^{n} = 2sN^{2-\alpha}
\end{align*} 
and 
\begin{align*} 2sN^{2-\alpha} - 2N & = N \frac{2s}{\gamma_N} - 2N  < N \cdot 2\lfloor \tfrac{s}{\gamma_N} \rfloor \\ & = \# \left\{ 1 \leq l \neq m \leq N \ : \ \norm{x_l - x_m} \leq \frac{s}{N^\alpha} \right\}.
\end{align*} 
Since $\alpha < 1$, the claim follows for $N = b^{n}$.\\[12pt]
If $b^{n} + j = N < b^{n+1}$, counting the number of points satisfying $\norm{x_l - x_m} \leq \tfrac{s}{N^{\alpha}}$ needs more effort. We ust the decomposition
\begin{align*} \# \left\{ 1 \leq l \neq \right.& m \leq N \ : \left. \ \norm{x_l - x_m} \leq \frac{s}{N^\alpha} \right\} \\ & = \# \underbrace{\left\{ 1 \leq l \neq m \leq b^n \ : \ \norm{x_l - x_m} \leq \frac{s}{N^\alpha} \right\}}_{=:A}\\ & + 2 \# \underbrace{\left\{ 1 \leq l \leq b^n, b^n < m \leq N \ : \ \norm{x_l - x_m} \leq \frac{s}{N^\alpha} \right\}}_{=:B}
\\ & + \# \underbrace{\left\{ b^n \leq l,m \leq N \ : \ \norm{x_l - x_m} \leq \frac{s}{N^\alpha} \right\}}_{=:C}
\end{align*}
The first summand can be calculated as in the case $N=b^n$ which leads to
\begin{align*} b^n \cdot 2\lfloor \tfrac{s}{\gamma_N} \rfloor - 2b^n \leq |A| = b^n \cdot 2\lfloor \tfrac{s}{\gamma_N}  \rfloor 
\end{align*}
Regarding the second summand, every point $x_m$ of the van der Corput sequence with $m > b^n$ lies between two points $x_i, x_j$ with $i,j < b^m$ and thus there exist between $2\lfloor \tfrac{s}{\gamma_N} \rfloor$ and $2(\lfloor \tfrac{s}{\gamma_N} \rfloor + 2)$ points $x_l$ within $B$ for fixed $x_m$. This yields
\begin{align*}
2(N-b^n) \lfloor \tfrac{s}{\gamma_N} \rfloor \leq |B| \leq 2 (N-b^n) (\lfloor \tfrac{s}{\gamma_N} \rfloor + 2).
\end{align*}
Note that the points $x_l$ relevant in the set $C$ form a displaced van der Corput sequence. Let us consider here the case $N = b^n + b^{n-1}$ because the general case then follows inductively. For $N= b^n + b^{n-1}$ we can apply the result for $b^{n-1}$ on the set $C$ to obtain 
\begin{align*}
b^{n-1} \cdot 2\lfloor \frac{s}{N^\alpha} b^{n-1} \rfloor - 2b^{n-1} \leq |C| = b^{n-1} \cdot 2\lfloor \frac{s}{N^{\alpha}} b^{n-1}  \rfloor 
\end{align*}
Summing up the three summands, dividing by $N^{2-\alpha}$ and taking the limit implies the claim for arbitrary $N$.
\end{proof}

\section{Golden mean Kronecker sequence} Before we come to the proof of Theorem~\ref{thm:kron} we collect here some necessary background.

\paragraph{Continued fractions.} Recall that every irrational number $z$ has a uniquely determined infinite continued fraction expansion
$$z = a_0 + 1/(a_1+1/(a_2+\ldots)) =: [a_0;a_1,a_2,\ldots],$$
where the $a_i$ are integers with $a_0 = \lfloor z \rfloor$ and $a_i \geq 1$ for all $i \geq 1$. The continued fraction algorithm can be compactly written as $t_0 = z, a_1 = \lfloor t_0 \rfloor$ and $t_{i+1} = 1 / \{t_i\}, a_{i+1} = \lfloor t_{i+1} \rfloor$ for $i \geq 1$. The sequence of \textbf{convergents} $(r_i)_{i \in \NN}$ of $z$ is defined by
$$r_i = [a_0;a_1;\ldots;a_i].$$
The convergents $r_i = p_i/q_i$ with $\gcd(p_{i},q_{i}) = 1$ are also directly given by the recurrence relation
\begin{align*}
& p_{-1} = 0, \qquad p_{0} = 1, \qquad p_{i} = a_ip_{i-1} + p_{i-2}, \quad i \geq 0\\
& q_{-1} = 1, \qquad q_{0} = 0, \qquad q_{i} = a_iq_{i-1} + q_{i-2}, \quad i \geq 0.
\end{align*}
The residue $z - r_i$ can be calculated precisely as
\begin{align} \label{eq2}
		z - \frac{p_n}{q_n}= \frac{(-1)^n}{q_n\left( \frac{1}{t_n} q_n + q_{n-1} \right)}.
\end{align}
Furthermore note that $\phi = [1;1,1,1,\ldots]$.

\paragraph{Three Gap Theorem.} An important ingredient for the proof is	Three Gap Theorem. It states that there are at most three distinct lengths of gaps if one places $n$ points on a circle, at angles of $z, 2z, 3z, \ldots nz$ from the starting point. The theorem was first proven in \cite{Sos58} and many proofs have been found since then. We present it here in a similar flavor as it is formulated in \cite{Wei18}. 

\begin{thm}[Three Gap Theorem] \label{thm:3gap} Let $z \in (0,1)$ be irrational with continued fraction expansion $z =[a_0;a_1;a_2;\ldots]$ and convergents $r_n = p_n/q_n$. Furthermore let $N \in \NN$ with $N \geq 2$ have Ostrowski representation
	$$N = \sum_{i=1}^{m} b_i q_i$$
	with coefficients $0 \leq b_i \leq a_i$ for all $i=1,\ldots, m$ and $b_{i-1} = 0$ if $b_i = a_i$. Then for $K_l := \norm{ \{ q_l z\}}$ the finite sequence $(\left\{ n z \right\})_{n=1,\ldots,N-1}$ has at most three different lengths of gaps, namely 
	\begin{align*}
	L_1 & = K_{m-1} - (b_m-1) K_{m}, \\
	L_2 & = K_{m}\\
	L_3 & = L_1 + L_2.
	\end{align*}
\end{thm}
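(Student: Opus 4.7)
The plan is to prove the Three Gap Theorem by induction on $N$, tracking how gap lengths evolve under the rotation $T(x) = x + z \pmod 1$ on $\RR/\ZZ$. The guiding principle is that each increment $N \to N+1$ adds one new orbit point $\{Nz\}$, which subdivides exactly one existing gap on the circle. The inductive invariant I would maintain is that the set of gap lengths is always contained in $\{L_1, L_2, L_1 + L_2\}$, with $L_1, L_2$ given by the formulas in terms of the Ostrowski digits of $N$.

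Before starting, I would record two key identities from continued fraction theory. First, from \eqref{eq2} combined with the recurrence $q_{l+1} = a_{l+1} q_l + q_{l-1}$ (and the alternating sign of $q_l z - p_l$), one derives $K_{l-1} = a_{l+1} K_l + K_{l+1}$. Second, $\{q_l z\}$ equals either $K_l$ or $1 - K_l$ according to the parity of $l$, so that the best approximations cluster around $0$ on alternating sides. These two facts form the algebraic backbone for reconciling the dynamical subdivisions with the claimed formulas.

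For the base case $N = q_m$ (that is, $b_m = 1$ and all other $b_i = 0$) I would invoke the classical two-distance lemma: an orbit of $q_m$ points under an irrational rotation admits at most two distinct gap lengths, and an explicit calculation using the first of the recorded identities shows these are exactly $L_1 = K_{m-1}$ and $L_2 = K_m$. For the inductive step I would analyze each increment $N \to N+1$: as long as an $L_3$-gap exists, the new point $\{Nz\}$ lands in one and splits it into an $L_1$- and an $L_2$-piece, which reduces the parameter $L_1$ by exactly $K_m$, matching the update $b_m \to b_m + 1$ in the formula $L_1 = K_{m-1} - (b_m - 1) K_m$. When no $L_3$-gap remains, the Ostrowski digits must carry over, and the transition to the next level is checked using the recurrence $K_{m-1} = a_{m+1} K_m + K_{m+1}$, which guarantees that the old $L_1$ gets correctly reinterpreted as the new $L_3 = K_m + K_{m+1}$ with $L_1 = K_m, L_2 = K_{m+1}$.

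The main obstacle, and the most delicate part of the bookkeeping, is verifying that each new point $\{Nz\}$ indeed lands in the currently-largest gap, not in an $L_1$- or $L_2$-gap. The key input is that $q_m$ successive rotations by $z$ amount to a perturbation by $\pm K_m \pmod 1$, which is strictly smaller than the current $L_1$ and $L_2$ by the ordering $K_{m+1} < K_m < K_{m-1}$ and the recurrence above. Once this is established, the batch of $q_m$ new points entering after the $N$-th step can be shown to hit the distinct $L_3$-gaps one by one, and the formula for $L_1$ evolves linearly in $b_m$ exactly as claimed. The rest of the argument is careful Ostrowski bookkeeping to pass through the carry boundaries $b_m = a_m$.
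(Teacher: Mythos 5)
The paper does not prove this theorem at all: it is quoted as a classical result, with \cite{Sos58} for the original proof and \cite{Wei18} for the formulation used (where it is derived via Rauzy--Veech induction). So there is no in-paper argument to match; your direct induction on $N$, tracking how each new orbit point subdivides a gap, is a different and in principle legitimate route -- it is essentially the classical ``dynamical'' proof of the three-distance theorem.

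As written, though, the sketch has a genuine gap exactly at the step you yourself flag as the main obstacle. The observation that $\{Nz\}$ sits at distance $K_m$ from the existing point $\{(N-q_m)z\}$, together with $K_m < L_1$, does \emph{not} show that the new point lands in an $L_3$-gap: it is equally consistent with the new point landing in an adjacent $L_1$-gap and splitting it into pieces of lengths $K_m$ and $L_1 - K_m$, which would produce a fourth gap length and break the inductive invariant. Ruling this out requires positional information -- one must identify precisely which old points carry an $L_3$-gap on the side where the new point arrives (equivalently, track the multiplicity $N - q_m - q_{m-1}b_{m}$-type counts and the order in which the $L_3$-gaps are consumed) and verify that $\{(N-q_m)z\}$ is always one of them. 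That bookkeeping is the actual content of the theorem and is not supplied. Two further points need repair: the ``base case'' $N = q_m$ invokes a two-distance lemma that is itself the special case $b_m = 1$ of the statement being proved, so in a clean induction on $N$ it must emerge from the inductive step rather than be assumed; and the claim that each split ``reduces the parameter $L_1$ by exactly $K_m$'' conflates two different events -- within a batch the lengths $L_1, L_2, L_3$ are constant and only multiplicities change, while the update $L_1 \mapsto L_1 - K_m$ occurs only at the digit rollover $b_m \mapsto b_m + 1$, when the surviving $L_1$-gaps are reinterpreted as the new maximal gaps. The algebraic identities you record ($K_{l-1} = a_{l+1}K_l + K_{l+1}$ and the alternating sides of $\{q_l z\}$) are correct and are indeed the right raw material, but the combinatorial core is missing.
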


\paragraph{Preparatory Results.} In the following $(x_n) = \{ n \phi \}$ denotes the Kronecker sequence of the golden mean. 

\begin{lem} \label{lem:growth} Let $s > 0, l \in \NN$ and $0 < \alpha < 1$. Then
	$$4s > \lim_{N \to \infty} \frac{1}{N^{2-\alpha}} \# \left\{ 1 \leq m \leq N \ : \ \norm{x_l - x_m} \leq \frac{s}{N^\alpha} \right\} > \frac{s}{2} $$
	holds, if the limit exists.
\end{lem}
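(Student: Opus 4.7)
The plan is to express the count as the number of Kronecker points lying in a circular arc of length $2s/N^\alpha$, and to bound it by combining the Three Gap Theorem with the continued-fraction arithmetic of $\phi$.

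\textbf{Step 1 (reduction).} The condition $\norm{x_l - x_m} \leq s/N^{\alpha}$ is equivalent to $x_m$ lying in the arc $I_N := x_l + [-s/N^{\alpha}, s/N^{\alpha}]$ of length $L_N = 2s/N^{\alpha}$, so the count in question equals the number of terms of $(x_m)_{m=1}^N$ landing in $I_N$; the precise centre $x_l$ will play no role in the gap analysis below.

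\textbf{Step 2 (gap structure and asymptotics).} Because $\phi = [1;1,1,\ldots]$, all partial quotients equal $1$, the convergent denominators $q_m$ are Fibonacci numbers, and the Ostrowski digits are $0$ or $1$. For the unique $m$ with $q_m \leq N < q_{m+1}$ the leading digit is $b_m = 1$, so Theorem~\ref{thm:3gap} gives gap lengths $L_1 = K_{m-1}$, $L_2 = K_m$, $L_3 = K_{m-1}+K_m$, where $K_l = \norm{q_l \phi}$. Using \eqref{eq2} with $t_n \equiv \phi$ one obtains $K_n = \phi/(q_n + \phi q_{n-1})$, and the limits $q_{n-1}/q_n \to 1/\phi$, $q_{m+1}/q_m \to \phi$ yield $q_m K_m \to \phi/2$ and $q_m(K_{m-1}+K_m) \to \phi^3/2$. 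Hence, for $N \in [q_m, q_{m+1})$ and large $m$,
$$N K_m \in \bigl[\tfrac{\phi}{2} - o(1),\, \tfrac{\phi^2}{2} + o(1)\bigr], \qquad N(K_{m-1}+K_m) \in \bigl[\tfrac{\phi^3}{2} - o(1),\, \tfrac{\phi^4}{2} + o(1)\bigr].$$

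\textbf{Step 3 (counting in an arc and conclusion).} For a circular partition with gap lengths in $[g_-, g_+]$, an arc of length $L$ contains a number $k$ of partition points satisfying $L/g_+ - 1 \leq k \leq L/g_- + 1$: the lower bound because the arc meets at most $k+1$ gaps of total length at least $L$, the upper bound because any $k$ points span at least $(k-1) g_-$. Applied with $g_- = K_m$, $g_+ = K_{m-1}+K_m$ and $L = L_N$, then divided by $N^{1-\alpha}$ (the $\pm 1$ vanishing since $\alpha < 1$), the asymptotics from Step 2 give
$$\liminf_{N \to \infty} \geq \tfrac{4s}{\phi^4} > \tfrac{s}{2} \quad \text{(as $\phi^4 = (7+3\sqrt 5)/2 < 8$)}, \qquad \limsup_{N \to \infty} \leq \tfrac{4s}{\phi} < 4s.$$
Whenever the limit exists it therefore lies in $(s/2, 4s)$, as required.

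The delicate point is Step 2: the constants $\phi/2, \phi^2/2, \phi^3/2, \phi^4/2$ must be sharp enough that $4s/\phi$ and $4s/\phi^4$ just squeeze into $(s/2, 4s)$; this is a tight but genuine numerical fact about the golden mean, and is what makes the Fibonacci case particularly tractable.
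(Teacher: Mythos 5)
Your Step 2 contains a computational error that invalidates the final numerical verification. The formula $K_n = \phi/(q_n+\phi q_{n-1})$ is wrong: the denominator in \eqref{eq2} has to be read with the \emph{complete quotient} $\phi$ multiplying $q_n$, i.e. $K_n = 1/(\phi q_n + q_{n-1})$ --- this is exactly the convention the paper itself uses in Lemma~\ref{lem:limit1}, where it derives $|q_{h-1}\phi - q_{h-2}| = 1/(\phi q_{h-1}+q_{h-2})$. A direct check: $\norm{2\phi} = |2\phi-3| \approx 0.236 = 1/(2\phi+1)$, whereas your formula gives $\phi/(2+\phi)\approx 0.447$. Consequently $q_m K_m \to 1/(\phi+\phi^{-1}) = 1/\sqrt{5} \approx 0.447$, not $\phi/2 \approx 0.809$, and all four constants in your display are too large by the factor $\phi\sqrt{5}/2 \approx 1.81$. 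Feeding the correct constants into your own Step 3 gives $\limsup \leq 2s/(1/\sqrt{5}) = 2\sqrt{5}\,s \approx 4.47s$, which does \emph{not} beat $4s$; the lower bound survives ($2\sqrt{5}\,s/\phi^{3} \approx 1.06s > s/2$), but the upper bound collapses. As you yourself flag, the argument lives or dies by whether the constants ``just squeeze into $(s/2,4s)$'' --- with the correct value of $K_n$ they do not.

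The structural reason is that bounding every gap below by the global minimum $K_m$ is too crude when $N$ is just above $q_m$, where $NK_m$ can be as small as $1/\sqrt{5} < 1/2$. To rescue the upper bound you need the additional fact that gaps of length $K_m$ are rare for such $N$: there are only $N-q_m$ of them, each arising from splitting a gap $K_{m-2}$ into $K_{m-1}+K_m$, so they are never adjacent and an arc cannot be packed with them. The gaps that can actually recur consecutively have length at least $K_{m-1}$, and $NK_{m-1} \geq q_m K_{m-1} \to \phi/\sqrt{5} > 1/2$, which gives $\limsup \leq 2\sqrt{5}\,s/\phi < 4s$. This is precisely the information encoded in Lemma~\ref{lem:multiplicities} (the proportion of long gaps in any window tends to $1/\phi$) and in the paper's use of $\norm{\{q_{h-1}\phi\}}$ as the relevant small gap. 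A minor further point: you silently replace the stated normalization $N^{2-\alpha}$ by $N^{1-\alpha}$ for the fixed-$l$ count; that is the only reading under which the statement is non-vacuous (and it matches the paper's own proof, which inserts a factor $N$), but you should say so explicitly.
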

\begin{proof} Let $q_h \leq N < q_{h+1}$. By the Three Gap Theorem, the large gaps of $(x_n)_{n=1}^N$ are of length $\norm{\{q_{h-2}z\}} < \tfrac{1}{q_{h-1}} < \tfrac{4}{q_h}$. Therefore we have
\begin{align*}
	\left\lfloor \frac{\frac{s}{N^\alpha}}{\{q_{h-2}z\}} \right\rfloor& > \frac{sN}{4N^\alpha} - 1.
\end{align*}
From this we deduce
\begin{align*}
	\frac{1}{N^{2-\alpha}} \# & \left\{ 1 \leq m \leq N \ : \ \norm{x_l - x_m} \leq \frac{s}{N^\alpha} \right\}\\
	& > \frac{1}{N^{2-\alpha}} \cdot N \cdot 2 \cdot \left(\frac{sN}{4N^\alpha} - 1 \right)
\end{align*}
which implies the right inequality. The left one follows in the same manner by recalling that the small gap length is $\norm{\{q_{h-1}z\}} > \tfrac{1}{q_h+q_{h-1}} > \tfrac{1}{2q_h} > \tfrac{1}{4N}$.
\end{proof}
Note that Lemma~\ref{lem:growth} is wrong in the case $\alpha = 1$. It implies that the numerator in $F_N^\alpha$ has the right order of convergence for $0 < \alpha < 1$ but it takes much more effort to show that the limit exists and calculate it explicitly. Define the finite sequence $(x_n^*)_{n=0}^N$ by sorting $(x_n)_{n=0}^N$ in ascending order. Let $J_k^N(x_n^*)$ denote the interval $(x_n^*;x_{n+k}^*]$. 

\begin{lem} \label{lem:multiplicities}
\begin{itemize}
	\item[(i)] Let $N = q_h$ and $k = q_m$ with $h \geq m$ and choose $n$ with $n+k \leq N$. Then $J_k^N(x_n^*)$ consists of either $q_{m-1}$ or $q_{m-1} + 1$ large and $q_{m-2}$ or $q_{m-2} -1$ small gaps respectively. 
	\item[(ii)] Let $q_{m+1} > k > q_m$ have Ostrowski representation
		$$k = \sum_{i=1}^{m} b_i q_i$$
	with coefficients $b_m = 1, b_i \in \left\{ 0 ,1\right\}$ for all $i=1,\ldots, m-1$ and $b_{i-1} = 0$ if $b_i = 1$. Furthermore choose $n \in \NN$ arbitrary and $N =q_h \geq n + q_{m+1}$. Then $J_k^N(x_n^*)$ consists of $g$ or $g+1$ large gaps and $k - g$ or $k-g-1$  small gaps respectively, where
	$$g = \sum_{i=1}^{m-1} b_i q_{i-1}$$
	\end{itemize}
\end{lem}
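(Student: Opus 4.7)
The plan is to combine the Three Gap Theorem with the Fibonacci/Sturmian structure of the gap pattern. Applied at $N=q_h$ with Ostrowski expansion $b_h=1$, the Three Gap Theorem produces only two gap lengths, $L_1=K_{h-1}$ (\emph{large}, L) and $L_2=K_h$ (\emph{small}, S), with respective multiplicities $q_{h-1}$ and $q_{h-2}$. The cyclic order of L's and S's around the circle coincides with a Sturmian word for the golden rotation, obtained by iterating the substitution $\sigma\colon L\mapsto LS,\ S\mapsto L$. Two standard features are needed: ($\ast$) $\sigma^{m-1}(L)$ has length $q_m$ and contains exactly $q_{m-1}$ letters L (and hence $q_{m-2}$ letters S), and ($\ast\ast$) the resulting infinite word is \emph{balanced}, that is, any two factors of the same length differ in their L-count by at most $1$.

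For part (i), the window $(x_n^*, x_{n+k}^*]$ with $k=q_m$ selects a factor of length $q_m$ in this L/S word. By $(\ast)$ the prefix of length $q_m$ has L-count $q_{m-1}$, and by $(\ast\ast)$ any factor of length $q_m$ has L-count in $\{q_{m-1}-1,\,q_{m-1},\,q_{m-1}+1\}$. Since the total count of letters is exactly $k=q_m=q_{m-1}+q_{m-2}$, the S-count is forced to be $q_{m-2}$ or $q_{m-2}\mp 1$. This yields exactly the possibilities stated in (i); the word ``respectively'' refers to the sign being coupled between the L- and S-counts.

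For part (ii), I would decompose $k$ via its Ostrowski/Zeckendorf expansion $k=\sum_{i\leq m}b_iq_i$, where $b_m=1$ and $b_{i-1}=0$ whenever $b_i=1$. The window $(x_n^*,x_{n+k}^*]$ is cut into consecutive sub-windows whose lengths are the Fibonacci numbers $q_i$ with $b_i=1$; the hypothesis $N=q_h\geq n+q_{m+1}$ ensures $h>m$, so part (i) applies to every sub-window. Summing the nominal L-counts $q_{i-1}$ across the nonzero digits produces the claimed $g$, and the small/large totals then follow from $k-g$ being forced by the length constraint.

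The main obstacle is controlling the error in part (ii): a term-by-term application of part (i) to each sub-window would accumulate a $\pm 1$ slack per nonzero Ostrowski digit, whereas the statement allows only a single global $\pm 1$. To collapse the error I would either invoke balancedness once at the full scale $k$, or proceed by induction on the number of nonzero digits, exploiting the self-similar structure of $\sigma^\infty(L)$. The non-adjacency condition $b_{i-1}=0$ whenever $b_i=1$ is crucial here: it forces the sub-window boundaries to fall at the predictable ``seams'' of the substitution $\sigma$, so that concatenating a $\sigma^{m-1}(L)$-block with a lower-order $\sigma^{i-1}(L)$-block never breaks the L/S pattern in an unexpected way, and the boundary discrepancies cannot cascade.
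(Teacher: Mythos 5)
Your structural identification is sound and is in fact the same mechanism the paper uses: the passage from $(x_n^*)_{n=1}^{q_h}$ to $(x_n^*)_{n=1}^{q_{h+1}}$ turns each small gap into a large one and splits each large gap into a large and a small one, which is precisely your substitution $L\mapsto LS$, $S\mapsto L$; the paper proves (i) by running this refinement inductively. However, there are two concrete gaps in your plan. First, in (i) the balancedness property $(\ast\ast)$ as you invoke it only bounds the $L$-count of a length-$q_m$ factor to lie in $\{q_{m-1}-1,\,q_{m-1},\,q_{m-1}+1\}$, which is one value too many: the lemma asserts the count is $q_{m-1}$ or $q_{m-1}+1$, so you must rule out $q_{m-1}-1$. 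For that you need the sharper Sturmian fact that the $L$-count over all factors of a given length takes exactly two \emph{consecutive} values, together with a computation identifying which two values occur for length $q_m$ (e.g. via $q_m/\phi = q_{m-1}-(-1)^m\phi^{-m}$); as written your argument does not deliver the stated conclusion.

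Second, and more seriously, for (ii) you correctly name the error-accumulation problem but do not solve it: you offer two candidate strategies (balancedness at scale $k$, or induction on the number of nonzero digits) without executing either, and the first again only yields a three-value window unless you establish the two-consecutive-values property for the cyclic gap word and compute the count of a specific factor of length $k$. The paper closes this gap by a complementation trick: in the case $b_{m-2}=1$ it compares the sub-window $J:=J_{q_{m-2}}^N(x_{q_m}^*)$ with the \emph{complementary} window $\widetilde J:= J_{q_{m+1}-q_m-q_{m-2}}^N(x_{q_m+q_{m-2}}^*)$ inside $J_{q_{m+1}}^N(x_0^*)$; part (i) pins the large-gap counts of $J_{q_m}^N(x_0^*)$, $J$, $\widetilde J$ and $J_{q_{m+1}}^N(x_0^*)$ each to two consecutive values, and since the first three counts must add up to the fourth while $q_{m-1}+q_{m-3}+q_{m-4}=q_m$, the count of $J_k^N(x_0^*)$ is forced into exactly the two values $q_{m-1}+q_{m-3}$ and $q_{m-1}+q_{m-3}+1$. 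The non-adjacency condition $b_{i-1}=0$ when $b_i=1$ is what keeps the complement expressible through lower-order $q_i$'s so the argument can recurse. Some version of this complementation (or a fully proved balancedness statement with the correct pair of values) is indispensable; without it your outline does not yet constitute a proof.
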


\begin{proof} (i) By the rotation symmetry of the Kronecker sequence we may without loss of generality assume $n=0$.  We prove the claim by induction on $h$. The case $h=1$ is clear. For $N=q_{h+1}$ and $k=q_{h+1}$ there is also nothing to prove. Hence let $k = q_{h-r}$ with $r \in \NN_0$. By induction hypothesis, the first $q_{h-r-1}$ elements of the sequence $(x_n^*)_{n=1}^{q_h}$ contain  without loss of generality $q_{h-r-2}$ large($q_h$) and $q_{h-r-3}$ small($q_h$) gaps.\footnote{By large($q_h$) and small($q_h$) we mean the large gaps and small gaps of $(x_n)_{n=0}^{q_h}$ respectively.} When passing from $(x_n^*)_{n=1}^{q_h}$ to $(x_n^*)_{n=1}^{q_h+1}$, each of the small($q_h)$ becomes large($q_{h+1})$ and each of the large($q_h$) gaps is divided into a large($q_{h+1}$) and a small($q_{h+1}$) gap, compare e.g. \cite{Wei18}. In total, we end up with $q_{h-r-2} + q_{h-r-3} = q_{h-r-1}$ or $q_{h-r-2} + q_{h-r-3} + 1= q_{h-r-1} + 1$ large gaps since we do not know whether the last gap is large or small.\\[12pt] 
	(ii) It suffices to prove the formula for the number of large gaps. Again we assume $n=0$. We know by (i) that $J^r:=J_{q_r}(x_0^*)$ consists of either $q_{r-1}$ or $q_{r-1}+1$ large gaps for $r = m, m+1$. At first, we consider two special cases:\\[12pt] 
	(a) If $b_{m-2} = 1$, then we look at $J:=J_{q_{m-2}}^N(x_{q_m}^*)$ and  $\widetilde{J}:= J_{q_{m-3}}^N(x_{q_m+q_{m-2}}^*) = J_{q_{m+1}-q_m-q_{m-2}}^N(x_{q_m+q_{m-2}}^*)$. Assertion (i) implies that $J$ consists of $q_{m-3}$ or $q_{m-3} + 1$ large gaps and $\widetilde{J}$ has $q_{m-4}$ or $q_{m-4} + 1$ large gaps. For $k = q_m + q_{m-2}$ the four conditions can only be fulfilled simultaneously if $J_k^N(x_0^*)$ consists of either $q_{m-1} + q_{m-3}$ or $q_{m-1} + q_{m-3} + 1$ large gaps. Inductively the claim follows for $s \leq m$ and $k=\sum_{i \leq s, i \, \textrm{even}} q_{m-i}$.\\[12pt] 
	(b) If $b_{m-2} = 0$ and $b_{m-3}=1$, then note that $q_{m+1}-q_m-q_{m-3} = q_{m-2}$ and we can deduce the claim from (i) in the same manner as in (a).\\[12pt] In general, let $s\geq 3$ be minimal with $b_{m-s} = 1$ and $b_{m-1} = \ldots = b_{m-(s+1)} = 0$. If $s$ is odd, then $q_{m+1} - q_m - q_s = q_{m-2}+q_{m-4}+ \ldots + q_{m-s+1}$ and if $s$ is even, then $q_{m+1} - q_m - q_s = q_{m-2} + q_{m-4} + \ldots + q_{m-s+2} - q_{m-s-1}$. The two cases correspond to (a) and (b) respectively. 
\end{proof}

The limit of the fraction of the two expressions $k$ and $g$ from Lemma~\ref{lem:multiplicities} is calculated next.
\begin{lem} \label{lem:limitquotient} The following holds:
	$$\lim_{N \to \infty} \frac{\sum b_iq_i}{\sum b_i q_{i-1}} = \phi.$$
\end{lem}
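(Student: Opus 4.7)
The plan is to exploit the well-known fact that for the golden mean the denominators $q_i$ satisfy $q_i = q_{i-1} + q_{i-2}$ (Fibonacci recurrence, since all $a_i = 1$), so the ratio $q_i/q_{i-1}$ tends to $\phi$ geometrically fast. The key observation is that the deviation from $\phi$ can be expressed in a closed form: writing $\psi = -1/\phi$ (the other root of $x^2 = x+1$) and using the Binet-type formula $q_i = (\phi^{i+1} - \psi^{i+1})/\sqrt{5}$, a short calculation yields the exact identity
\begin{equation*}
\phi\, q_{i-1} - q_i \;=\; -\psi^i \;=\; \frac{(-1)^{i+1}}{\phi^i}.
\end{equation*}
(Alternatively, this can be checked by induction using $q_i = q_{i-1}+q_{i-2}$ together with $\phi^2 = \phi + 1$.)

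With this identity in hand, the strategy is to multiply the denominator by $\phi$ and compare it to the numerator. The plan is to write
\begin{equation*}
\phi \sum b_i q_{i-1} - \sum b_i q_i \;=\; \sum b_i (\phi q_{i-1} - q_i) \;=\; -\sum b_i \psi^i,
\end{equation*}
and then bound the right-hand side by the geometric tail
\begin{equation*}
\left| \sum b_i \psi^i \right| \;\leq\; \sum_{i=1}^{\infty} \phi^{-i} \;=\; \frac{1}{\phi - 1},
\end{equation*}
which is a fixed constant independent of $N$. Thus
\begin{equation*}
\phi \;-\; \frac{\sum b_i q_i}{\sum b_i q_{i-1}} \;=\; \frac{-\sum b_i \psi^i}{\sum b_i q_{i-1}},
\end{equation*}
and it suffices to show that the denominator tends to $\infty$.

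For this last step, observe that if $m$ is the largest index with $b_m = 1$ in the Ostrowski representation of $N$, then $\sum b_i q_{i-1} \geq q_{m-1}$, and the constraint $N \to \infty$ forces $m \to \infty$, hence $q_{m-1} \to \infty$. Combining this with the uniform bound on the numerator gives convergence of the error to zero, establishing the claim. The argument is essentially routine once the identity $\phi q_{i-1} - q_i = -\psi^i$ is noticed; the only mild subtlety is being careful that the Ostrowski constraint (no two consecutive $b_i$ equal to $1$) ensures the indices in the sum truly range up to $m \to \infty$, so no obstacle arises on the denominator side.
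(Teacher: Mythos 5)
Your proof is correct and takes a genuinely different route from the paper. The paper uses the recurrence $q_i = q_{i-1}+q_{i-2}$ to rewrite the quotient as $1 + \frac{\sum b_i q_{i-2}}{\sum b_i q_{i-1}}$ and then argues that the limit must satisfy the fixed-point equation $z = 1 + 1/z$, whose unique positive solution is $\phi$; this is short but glosses over the fact that the coefficients $b_i$ depend on $N$, so the existence of the limit and the identification of the shifted quotient with $1/z$ are not entirely immediate. Your argument, by contrast, is fully quantitative: the closed-form identity for $\phi q_{i-1}-q_i$ turns the deviation into a bounded numerator (a convergent geometric sum, using the Ostrowski constraint only to guarantee $b_i \in \{0,1\}$) over a denominator that is at least $q_{m-1} \to \infty$, which settles both existence and value of the limit in one stroke. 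The only blemish is an index shift: with the paper's normalization $q_{-1}=1$, $q_0=0$ (so $q_i = F_i$), the Binet formula reads $q_i = (\phi^i - \psi^i)/\sqrt{5}$ and the identity is $\phi q_{i-1} - q_i = -\psi^{i-1}$ rather than $-\psi^{i}$; this changes nothing in the estimate, since $\bigl|\sum b_i \psi^{i-1}\bigr| \leq \sum_{i \geq 1} \phi^{-(i-1)}$ is still a fixed constant. Overall your version buys rigor and an explicit error bound of order $O(1/q_{m-1})$ at the cost of a slightly longer computation.
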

\begin{proof} It follows from $q_i = q_{i-1}+q_{i-2}$ that
	$$
		\frac{\sum b_iq_i}{\sum b_i q_{i-1}}  = \frac{\sum b_i(q_{i-1}+q_{i-2})}{\sum b_i q_{i-1}} = 1 + \frac{\sum b_iq_{i-2}}{\sum b_i q_{i-1}}.
	$$
	Thus the limit fulfills the equation
	$$z = 1 + \frac{1}{z}$$
	which has the unique positive solution $\phi$.
\end{proof}

From Lemma~\ref{lem:multiplicities} and Lemma~\ref{lem:limitquotient} we see that in the limit there are $\phi$ times as many long gaps as short ones. Another limit will appear in the proof of Theorem~\ref{thm:kron}. For clarity of presentation we also calculate this limit as a separate lemma.

\begin{lem} \label{lem:limit1} We have
	$$\lim_{h \to \infty} \left| \left(1+\frac{1}{\phi^2} \right) \cdot \norm{\{ q_{h-1} \phi \}} \cdot q_h \right| = 1$$
\end{lem}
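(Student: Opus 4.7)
The plan is to reduce this to a direct computation using the continued-fraction identity~\eqref{eq2} together with the Fibonacci recurrence satisfied by the $q_h$. Since $\phi=[1;1,1,\ldots]$ has all partial quotients equal to $1$, the denominators of the convergents satisfy $q_h = q_{h-1} + q_{h-2}$, and the complete quotient at every stage of the expansion is $\phi$ itself, so in~\eqref{eq2} one has $1/t_{h-1} = \phi$.

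Applying \eqref{eq2} at index $n=h-1$ yields
\[
\norm{\{q_{h-1}\phi\}} \;=\; \frac{1}{\phi\, q_{h-1} + q_{h-2}}.
\]
Multiplying by $q_h = q_{h-1}+q_{h-2}$ and dividing numerator and denominator by $q_{h-1}$ gives
\[
\norm{\{q_{h-1}\phi\}} \cdot q_h \;=\; \frac{1 + q_{h-2}/q_{h-1}}{\phi + q_{h-2}/q_{h-1}}.
\]
The ratio $q_{h-2}/q_{h-1}$ tends to $1/\phi$ as $h\to\infty$ (a standard Fibonacci fact, and in any case the content of Lemma~\ref{lem:limitquotient} applied to $b_i = \delta_{i,h-1}$), so passing to the limit and using $\phi^2 = \phi+1$ (so that $1+1/\phi = \phi$ and $\phi+1/\phi = (\phi+2)/\phi$) produces
\[
\lim_{h\to\infty} \norm{\{q_{h-1}\phi\}}\cdot q_h \;=\; \frac{\phi^2}{\phi^2+1} \;=\; \frac{\phi+1}{\phi+2}.
\]

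The statement now follows by multiplying by $1+1/\phi^2 = (\phi^2+1)/\phi^2 = (\phi+2)/(\phi+1)$, which cancels exactly against $(\phi+1)/(\phi+2)$ to give $1$; the absolute value in the statement is vacuous since every factor is positive. No substantive obstacle arises: the only care needed is to correctly identify the complete quotient as $\phi$ when invoking~\eqref{eq2}, and to apply the identity $\phi^2=\phi+1$ consistently to collapse the resulting algebraic expression.
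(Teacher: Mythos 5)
Your proof is correct and follows essentially the same route as the paper: both apply identity \eqref{eq2} with the complete quotient $1/t=\phi$ to get $\norm{\{q_{h-1}\phi\}}=1/(\phi q_{h-1}+q_{h-2})$, multiply by $q_h$, and pass to the limit using the Fibonacci ratio $q_{h-2}/q_{h-1}\to 1/\phi$. The only difference is cosmetic (you normalize by $q_{h-1}$ where the paper normalizes by $q_h$), and your final simplification via $\phi^2=\phi+1$ agrees with the paper's limit $1/(1+1/\phi^2)$.
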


\begin{proof}
	Note that for $\phi$ we have $\frac{1}{|{t_l}|} = \phi$ and $p_l = q_{h-1}$. Hence it follows from \eqref{eq2} that
	$$\left| q_{h-1}\phi - q_{h-2} \right| = \frac{1}{\phi q_{h-1}+q_{h-2}}.$$
	Multiplying both sides by $q_h$ leads to the expression
	\begin{align*}
		q_h \left| q_{h-1}\phi - q_{h-2} \right| & = \frac{q_h}{\phi q_{h-1} + q_{h-2}}\\
		& = \frac{1}{\phi \frac{q_{h-1}}{q_h} + \frac{q_{h-2}}{q_h}}, 
	\end{align*}
	which converges to 
	$$\frac{1}{1+\frac{1}{\phi^2}}$$
	for $h \to \infty$. This implies the claim.
\end{proof}

\paragraph{Proof of Theorem \ref{thm:kron}} Finally, we come to the proof of our second main result.

\begin{proof}[Proof of Theorem \ref{thm:kron}] At first, we give here a short proof of the well-known fact that $\left\{ n\phi \right\}$ does not have Poissionian pair correlations: Consider the sequence $(q_h)_{h_\in \NN}$. According to Three Gap Theorem the minimal gap length of the Kronecker sequence is $\{ q_{h-1} \phi \}$. From the theory of continued fractions we know that
	$$\norm{\left\{q_{h-1} \phi \right\}} = \left| q_{h-1} \phi - p_{h-1} \right| > \frac{1}{q_h+q_{h-1}} > \frac{1}{2q_h}.$$
Therefore, $F_{q_h}^1(\tfrac{1}{2}) = 0$ for all $h$ and the sequence $(x_n)$ cannot have Poissonian pair correlations.\\[12pt]
	Next we come to the claim that $(x_n)$ has $\alpha$-pair correlations. Let $0 < \alpha < 1$ be arbitrary. We split up the proof into two parts.\\[12pt]
	\textit{Step 1: Calculate the limit}
		$$\lim_{h \to \infty} \frac{1}{q_h^{2-\alpha}} \# \left\{ 1 \leq l \neq m \leq q_h \ : \ \norm{x_l - x_m} \leq \frac{s}{q_h^\alpha} \right\},$$
	i.e. we consider $N = q_h$ at first. In this case, Three Gap Theorem implies that there are only two different gap lengths, namely $\norm{\{q_{h-1}\phi\}}$ and $\norm{\{q_{h-1} \phi\}} + \norm{\{q_{h}\phi\}}$. We fix some $x_n^*$. By Lemma~\ref{lem:multiplicities} the length of $J_k^N({x_n^*})$ is for $k = \sum_{i=1}^{m} b_i q_i$ and $g = \sum_{i=1}^{m-1} b_i q_{i-1}$ either
	\begin{align*}
	|J_k^N(x_n^*)| & = g \cdot (\norm{\{q_{h-1} \phi\}} + \norm{\{q_{h}\phi\}}) + (k-g) \cdot \norm{\{q_{h-1}\phi\}} \\
	&  = k \cdot \norm{\{q_{h-1}\phi\}} + g \cdot \norm{\{q_{h}\phi\}}
	\end{align*}
	or 
	\begin{align*}
	|J_k^N(x_n^*)| & = (g-1) \cdot (\{q_{h-1} \phi\} + \{q_{h}\phi\}) + (k-g-1) \cdot \{q_{h-1}\phi\} \\
	&= k \cdot \norm{\{q_{h-1}\phi\}} + (g-1) \cdot \norm{\{q_{h}\phi\}}
	\end{align*}
	Without loss of generality we may assume that $J_k^N(x_n^*) = k \cdot \norm{\{q_{h-1}\phi\}} + g \cdot \norm{\{q_{h}\phi\}}$ because a single point less does not have an influence on the asymptotic. Now let $\varepsilon > 0$ be arbitrarily small and choose $h$ big enough such that simultaneously $|\tfrac{g}{k} - \tfrac{1}{\phi}| < \varepsilon$ and $|\norm{\{ q_h \phi \}} - \tfrac{1}{\phi} \norm{\{ q_{h-1} \phi \}}| < \varepsilon$ hold. Given an interval $I = (x_n^*,x_n^*+\tfrac{s}{q_h^\alpha}]$, the number of points lying in it can thus be calculated as
	\begin{align*}
		k & = \frac{\frac{s}{q_h^\alpha}}{\left(1+\frac{1}{\phi^2} \right)\norm{\{q_{h-1}\phi\}} + O(\varepsilon)}
	\end{align*}
	Note that this equality is true without any additional condition on $x_n^*$ if we glue the endpoints of $[0,1)$. After increasing $h$ if necessary, Lemma~\ref{lem:limit1} yields 
	\begin{align} \label{eq3}
		k & = s q_h^{1-\alpha} + O(\varepsilon).
	\end{align}
	This enables us to calculate
	\begin{align*}
		\lim_{h \to \infty} \frac{1}{q_h^{2-\alpha}} & \# \left\{ 1 \leq l \neq m \leq q_h \ : \ \norm{x_l - x_m} \leq \frac{s}{q_h^\alpha} \right\}\\ 
		&=  \lim_{h \to \infty} \frac{1}{q_h^{2-\alpha}} q_h \cdot \# \left\{ 1 \leq m \leq q_h \ : \ \norm{x_n - x_m} \leq \frac{s}{q_h^\alpha} \right\}\\
		&\stackrel{\eqref{eq3}}{=} \lim_{h \to \infty} \frac{1}{q_h^{2-\alpha}} q_h \cdot 2 \cdot (sq_h^{1-\alpha} + O(\varepsilon))\\
		&=2s.
	\end{align*}
	\textit{Step 2: $N$ arbitrary}\\[12pt]
	Let $q_{h+1} > N > q_h$ have Ostrowski decomposition
	$$N = \sum_{i=1}^{h} b_i q_i.$$
	Without loss of generality we assume that $N = q_h + q_{h-p}$ for some $p > 0$ because the general case then easily follows inductively. Similarly as for the van der Corput sequence, we decompose 
	\begin{align*} \# \left\{ 1 \leq l \neq \right.& m \leq N \ : \left. \ \norm{x_l - x_m} \leq \frac{s}{N^\alpha} \right\}
	\end{align*}
	into three subsets, namely
	\begin{align*} \# \left\{ 1 \leq l \neq \right.& m \leq N \ : \left. \ \norm{x_l - x_m} \leq \frac{s}{N^\alpha} \right\} \\ & = \# \underbrace{\left\{ 1 \leq l \neq m \leq q_m \ : \ \norm{x_l - x_m} \leq \frac{s}{N^\alpha} \right\}}_{=:A}\\ & + 2 \# \underbrace{\left\{ 1 \leq l \leq q_h, q_h < m \leq N \ : \ \norm{x_l - x_m} \leq \frac{s}{N^\alpha} \right\}}_{=:B}
	\\ & + \# \underbrace{\left\{ q_h \leq l,m \leq N \ : \ \norm{x_l - x_m} \leq \frac{s}{N^\alpha} \right\}}_{=:C}
	\end{align*}
	Let $A^*$ be the set $A$ with $s$ replaced by $s (\tfrac{N}{q_h})^\alpha$. By the first step we can then choose $h$ big enough such that $|A^*| = 2sq_h^{2-\alpha}  + o(N^{2-\alpha})$. Going back to $A$ this implies $|A| = 2sq_h^2N^{-\alpha} + o(N^{2-\alpha})$. For the calculation of $|C|$ note that $x_{n+q_h}$ is a displaced Kronecker sequence. Likewise as for $A$ we hence get $|C| = 2sq_{h-p}^2N^{-\alpha} + o(N^{2-\alpha})$. Finally we come to the set $B$. It concerns the interaction of the \textit{new} points $x_m$, i.e. $m > q_h$, with the \textit{old} ones $x_l$, i.e. $l \leq q_h$. Note that $B$ contains $N-q_h=q_{h-p}$ new points and that each new point splits a long gap into a short gap and a medium size gap, compare \cite{Wei18}. Again by the first step and a scaling argument as for $A$ and $C$ we thus know that a single new point contributes $2sq_hN^{-\alpha} + o(N^{1-\alpha})$ to $|B|$. In total, we get $2|B| = 4sq_{h-p}q_hN^{-\alpha} + o(N^{2-\alpha})$. That puts us into the position to calculate the limit
	\begin{align*}
	\lim_{N \to \infty} \frac{1}{N^{2-\alpha}} & \# \left\{ 1 \leq l \neq m \leq N \ : \ \norm{x_l - x_m} \leq \frac{s}{N^\alpha} \right\}\\
	& = \frac{N^{-\alpha} \cdot 2s \cdot (q_h^2+2q_hq_{h-p}+q_{h-p}^2) + o(N^{2-\alpha})}{N^{-\alpha} \cdot (q_h+q_{h-p})^2}\\& = 2s,
	\end{align*}
	which finishes the proof.
\end{proof}

\textsc{Christian Wei\ss, Hochschule Ruhr West, Duisburger Str. 100, D-45479 M\"ulheim an der Ruhr}\\
\textit{E-mail address:} \texttt{christian.weiss@hs-ruhrwest.de}\\[12pt]
\textsc{Thomas Skill, Hochschule Bochum, Lennershofstraße 140, D-44801 Bochum}\\
\textit{E-mail address:} \texttt{thomas.skill@hs-bochum.de}

\end{document}